\newcommand{\refe}[1]{(\ref{#1})}
\newcommand{\dst}{\displaystyle}
\newcommand{\dsp}{\displaystyle}
\newcommand{\half}{\mbox{$\frac12$}}
\newcommand{\bq}{\begin{equation}}
\newcommand{\eq}{\end{equation}}
\newcommand{\ba}{\begin{array}}
\newcommand{\ea}{\end{array}}
\newtheorem{theorem}{Theorem}
\newtheorem{remark}[theorem]{Remark}
\newtheorem{prop}[theorem]{Proposition}
\newtheorem{lemma}[theorem]{Lemma}
\newtheorem{ex}[theorem]{Example}
\newtheorem{corollary}[theorem]{Corollary}
\newenvironment{proof}{\noindent{\bf Proof:}}{\hfill$\square$}
\begin{document}

\title {\bf Recurrence Relations of the Hypergeometric-type functions on the quadratic-type lattices}

\author{Rezan Sev\.{\i}n\.{\i}k Ad\i g{\"{u}}zel}
\date{}
\maketitle

\begin{center}
{\it Atilim University, Department of
Mathematics,  Incek  06836, Ankara, Turkey}\\
{\it e-mail: rezan.adiguzel@atilim.edu.tr}\\
{\it Tel: +90 312 586 8661}
\end{center}

\begin{abstract}
The central idea of this article is to present a systematic approach to construct some recurrence relations for the 
solutions of the second-order linear difference equation of hypergeometric-type 
defined on the quadratic-type lattices. We introduce some recurrence relations 
for such solutions by also considering their applications to polynomials on the quadratic-type lattices.

{\bf Keywords:}{Hypergeometric function on $q$-quadratic lattices; Second-order linear difference equation of hypergeometric-type on the $q$-quadratic lattices; Recurrence relations; $q$-Racah polynomials; dual Hahn polynomials; TTRR.}

{\bf 2010 MSC:{33D45, 33C45, 42C05}}

\end{abstract}

\section{Introduction}

Hypergeometric functions have been studied by many researchers \cite{NSU, sus1, sus2}, with special interest previously in such functions defined on different type lattices such as uniform lattice like linear-type and non-uniform lattices, like quadratic, $q$-linear and $q$-quadratic types. In 1983, these functions were studied by Nikiforov and Uvarov who started from the second-order linear difference equation of hypergeometric-type satisfied by such functions, thereby paving the way for this theory to be developed by several other authors (see e.g. \cite{RAC, RM, ARS, ks, NSU, sus1, sus2}). Discrete polynomials are in the special class of these kind of hypergeometric functions and used in many problems \cite{RR1, RR2, RYR, ks, NSU, sus1, sus2}.

 In particular, $q$-polynomials on the $q$-quadratic lattices have been of particular interest in recent studies
(see e.g. \cite{RR1, RR2, RYR, ks, NSU, sus1, sus2})
since they are the most general discrete orthogonal families, from which all the other 
hypergeometric orthogonal polynomials can be obtained.
Such polynomials are the solutions of the second-order difference equation 
of hypergeometric-type defined on the $q$-quadratic lattices.

In this work, we introduce an approach to construct recurrence relations for the hypergeometric functions on the $q$-quadratic lattices $x(s)=c_1q^s+c_2q^{-s}+c_3$ which, also cover the hypergeometric-type functions
on the quadratic lattices $x(s)=c_1s^2+c_2s+c_3$ as a limit case when $q\longrightarrow 1$ 
(see e.g. \cite{ran1}). Here, we also apply $q$-Racah and dual Hahn polynomials on the $q$-quadratic and quadratic lattices.

Since, in several quantum-mechanical models, the wave functions can be expressed in terms of some  hypergeometric-type functions, such recurrence relations give more information about the physical systems modelled by such functions. In fact, the recurrence relations are more useful for the evaluation of these functions than the direct method (see e.g.,  \cite{RCQ, GST1, GST2} and the references therein).

This paper is motivated by the work done by  R. \'{A}lvarez-Nodarse et al. \cite{ran3, RC1, RC2}. In fact,  in \cite{RC1}, the authors considered the continuous case and obtained some recurrence relations for the Jacobi, Laguerre and Hermite polynomials in addition to the
difference analogues of hypergeometric functions
on the linear lattices $x(s)=s$ to apply the theory 
Hahn, Meixner, Charlier and Kravchuk polynomials. In \cite{RC2}, the authors studied the 
difference analogues of hypergeometric functions
on the linear-type lattices, and later applied the theory to the $q$-polynomials on $q$-linear lattices $x(s)=c_1q^s+c_2$ while considering the 
big $q$-Jacobi, Alternative $q$-Charlier polynomials as applications. For the quadratic case, there are only a few known recurrence relations (see the results by Suslov in \cite{sus1, sus2}). 
As such, the main aim of the present paper is to extend the results of \cite{RC2} to 
the general quadratic-type lattice and develope constructive approach 
for the recurrence relations of such functions.
Here we go further and consider the recurrence relations for the functions on the quadratic-type lattices
and apply the theory to the $q$-Racah and dual Hahn polynomials, thus expanding the results of the papers \cite{ran3, RC1, RC2}.

Notice that since the lattice considered in this paper is not linear-type, 
the general results of \cite{RC2} may not be applied. In particular, some of the representative examples considered in \cite{RC2} cannot be obtained for the quadratic-type lattices.

The structure of the paper is as follows: In section 2, the preliminary results are 
introduced. In section 3 and 4, the general theorems
for recurrence relations are given.
Finally, the last section concludes the paper with some representative examples.

\section{Preliminaries}
We include some useful information (see e.g. \cite{ran1, NSU}) on the $q$-hypergeometric functions
 needed for the rest of the paper.

The hypergeometric functions on the non-uniform lattices satisfy 
the following second-order difference equation of hypergeometric-type on the non-uniform lattices
\bq\label{diffeqn}
\sigma(s)\frac{\Delta}{\Delta x(s-\frac 12)}\Big[\frac{\nabla y(s)}{\nabla x(s)}\Big]+\tau(s)\frac{\Delta y(s)}{\Delta x(s)}+\lambda y(s)=0,
\eq
where
\bq\label{c1}
\sigma(s)=\widetilde{\sigma}(x(s))-\frac 12\widetilde{\tau}(x(s))\Delta x(s-\frac 12), \quad \tau(s)=\widetilde{\tau}(x(s)).
\eq
Here, $\Delta y(s)=y(s+1)-y(s)$ and $\nabla y(s)=y(s)-y(s-1)$ are the forward and backward 
difference operators, respectively, where
\bq\label{forwardbackward}
\Delta y(s)=\nabla y(s+1),
\eq
and the coefficients $\widetilde{\sigma}(x(s))$ and $\widetilde{\tau}(x(s))$
are polynomials in $x(s)$ of degree at most 2 and 1, respectively, and $\lambda$ is a constant.

In this paper, we study the quadratic-type lattices: the so-called quadratic lattice
\bq\label{quadraticlattice}
x(s)=c_1s^2+c_2s+c_3,
\eq
and the $q$-quadratic lattice,
\bq\label{q-quadraticlattice}
\!\!\!\!\!\!\!\!\!\!\!\!\!\!\!\!\!x(s)=c_1(q)q^s+c_2(q)q^{-s}+c_3(q)=c_1(q)\Big[q^s+q^{-s-\mu}\Big]+c_3(q), \quad q^{-\mu}=\frac{c_2(q)}{c_1(q)}
\eq
with $c_1\neq 0, c_1(q)\neq 0$.

\begin{remark}
Quadratic-type lattices have the following properties:
\begin{eqnarray}
&&\displaystyle\frac{x(s+k)+x(s)}{2}=\alpha_kx_k(s)+\beta_k,\label{latticeprop1}\\
&&x(s+k)-x(s)=\gamma_k\Delta x_k(s-\frac 12)=\gamma_k\nabla x_k(s+\frac 12)\label{latticeprop2}
\end{eqnarray} 
where
\bq\label{xk}
x_{k}(s)=x(s+\frac{k}{2})
\eq
and
\bq\label{alphabetagammak}
\alpha_k=\frac{q^{\frac k2}+q^{-\frac k2}}{2},\quad \beta_k=-\frac{c_3}{2}\Big(q^{\frac k4}-q^{-\frac k4}\Big)^2,\quad
\gamma_k=[k]_q.
\eq
Here, $[k]_q$ is the symmetric $q$-number defined by
\bq\label{qnumber}
[k]_q=\frac{q^{\frac k2}-q^{-\frac k2}}{q^{\frac 12}-q^{-\frac 12}}.
\eq
\end{remark}

\begin{theorem}
\cite{NSU, sus1}
The difference equation \refe{diffeqn}
has a particular solution 
\bq\label{sumynu}
y_{\nu}(z)=\frac{C_{\nu}}{\rho(z)}\sum_{s=a}^{b-1}\frac{\rho_{\nu}(s)\nabla x_{\nu +1}(s)}{\Big[x_{\nu}(s)-x_{\nu}(z)\Big]^{\nu +1}},
\eq
provided that the condition 
\bq\label{sumynucondition}
\frac{\sigma(s)\rho_{\nu}(s)\nabla x_{\nu +1}(s)}{\Big[x_{\nu-1}(s)-x_{\nu-1}(z+1)\Big]^{\nu +1}}\Bigg|_{s=a}^b=0
\eq
is satisfied. Here, $C_{\nu}$ is a constant.
\end{theorem}

Notice that $\rho(s)$ and $\rho_{\nu}(s)$ satisfy the following Pearson equations
\begin{align}
\frac{\rho(s+1)}{\rho(s)}&=\frac{\sigma(s)+\tau(s)\Delta x(s-\frac 12)}{\sigma(s+1)}=\frac{\phi(s)}{\sigma(s+1)},\nonumber\\
\frac{\rho_{\nu}(s+1)}{\rho_{\nu}(s)}&=\frac{\sigma(s)+\tau_{\nu}(s)\Delta x_{\nu}(s-\frac 12)}{\sigma(s+1)}=\frac{\phi_{\nu}(s)}{\sigma(s+1)},\label{pearsonrhonu}
\end{align}
where
\bq\label{taunu}
\tau_{\nu}(s)=\dsp\frac{\sigma(s+\nu)-\sigma(s)+\tau(s+\nu)\Delta x(s+\nu-\frac 12)}{\Delta x_{\nu -1}(s)},
\eq
and, therefore
\bq\begin{array}{lllll}\label{phinu}
\phi_{\nu}(s)&=\sigma(s)+\tau_{\nu}(s)\Delta x_{\nu}(s-\frac 12)\\
             &=\sigma(s+1)+\tau_{\nu-1}(s+1)\Delta x_{\nu-1}(s+\frac 12)\\
						 &=...\\
             &=\sigma(s+\nu)+\tau(s+\nu)\Delta x(s+\nu-\frac 12).
\end{array}
\eq
Notice that
\bq\label{phinuphi}
\phi_{\nu}(s)=\phi(s+\nu)=\sigma(s+\nu)+\tau(s+\nu)\Delta x(s+\nu-\frac 12),
\eq
where, $\nu\in C$ is the solution of 
\[
\lambda_{\nu}+[\nu]_q\Big\{\alpha_{\nu-1}\widetilde{\tau}'+[\nu-1]_q\frac{\widetilde{\sigma}''}{2}\Big\}=0,
\]
with $[x]_q$ and $\alpha_k$ defined by \refe{qnumber} and \refe{alphabetagammak}, respectively.

In the following, we will use the function $\widetilde{\sigma}_{\nu}(s)$ defined as 
\bq\label{sigmatildenu}
\widetilde{\sigma}_{\nu}(s)=\sigma(s)+\frac 12\tau_{\nu}(s)\Delta x_{\nu}(s-\frac 12).
\eq

By \refe{phinu} and \refe{sigmatildenu},
\begin{eqnarray}
\phi_{\nu}(s)+\sigma(s)&=&2\widetilde{\sigma}_{\nu}(s),\label{phinu+sigma}\\
\phi_{\nu}(s)-\sigma(s)&=&\tau_{\nu}(s)\Delta x_{\nu}(s-\frac 12)\label{phinu-sigma}.
\end{eqnarray}

The generalized power of the lattices $x_m(s)$, given in \refe{xk}, are defined as \cite{ran1}
\begin{eqnarray*} 
&&\Big[x_m(s)-x_m(z)\Big]^{(k)}=\prod_{i=0}^{k-1}(x_m(s)-x_m(z-i)), \quad k\in\mathbb{N}\\
&&\Big[x_m(s)-x_m(z)\Big]^{(0)}=1.
\end{eqnarray*}

The generalized power for the lattices \refe{quadraticlattice} and
\refe{q-quadraticlattice} are obtained as follows:\\
{For the quadratic lattice of the form \refe{quadraticlattice}}
\bq\label{generalizedpowerquadraticlattice}
\Big[x_{\nu}(s)-x_{\nu}(z)\Big]^{(\alpha)}=c_1^{\alpha}\frac{\Gamma(s-z+\alpha)\Gamma(s+z+\nu+\mu+1)}{\Gamma(s-z)\Gamma(s+z+\nu-\alpha+\mu+1)}, \quad \mu=\frac{c_2}{c_1}.
\eq
{ For the $q$-quadratic lattice of the form \refe{q-quadraticlattice}}
\begin{multline}\label{generalizedpowerq-quadraticlattice}
\Big[x_{\nu}(s)-x_{\nu}(z)\Big]^{(\alpha)}=
\frac{\Gamma_q(s-z+\alpha)\Gamma_q(s+z+\nu+C+1)}{\Gamma_q(s-z)\Gamma_q(s+z+\nu-\alpha+C+1)}q^{-\alpha(s+\frac{\nu}{2})}\\
\times\Big[c_1(q)(1-q)^2\Big]^{\alpha}=c_1^{\alpha}(q)q^{-\alpha(s+\frac{\nu}{2})}\dsp\frac{(q^{s-z};q)_{\infty}(\eta q^{s+z+\nu-\alpha+1};q)_{\infty}}{(q^{s-z+\alpha};q)_{\infty}(\eta q^{s+z+\nu+1};q)_{\infty}}, 
\end{multline}
where $C=\frac{\log (c_2(q)/c_1(q))}{\log q}$, $\eta=\frac{c_2(q)}{c_1(q)}$ and
classical $q$-Gamma function, $\Gamma_q$, is related to the infinite $q$-product \cite{GR} by formula
$$
\Gamma_q(s)=(1-q)^{1-s}\frac{(q;q)_\infty}{(q^s;q)_\infty},\quad 0<q<1.
$$
Here, the infinite $q$-product \cite{GR} is defined by $(a;q)_{\infty}=\dsp\prod_{k=0}^{\infty}(1-aq^k)$.

\begin{prop}\label{latticeprop} \cite{ran1, RC2, sus1}
Let $\nu$ be a complex number with $m$, $k$ as positive integers
with $m\geq k$. For the quadratic-type lattice of the form \refe{quadraticlattice} and \refe{q-quadraticlattice}, we have

\begin{eqnarray}
	&&\dsp\frac{\Big[x_{\nu}(s)-x_{\nu}(z)\Big]^{(m)}}{\Big[x_{\nu}(s)-x_{\nu}(z)\Big]^{(k)}}=\Big[x_{\nu}(s)-x_{\nu}(z-k)\Big]^{(m-k)},\label{1}\\
&&\dsp\frac{\Big[x_{\nu}(s)-x_{\nu}(z)\Big]^{(m+1)}}{\Big[x_{\nu-1}(s)-x_{\nu-1}(z)\Big]^{(m)}}=\Big[x_{\nu-m}(s+m)-x_{\nu-m}(z)\Big],\label{2}\\
&&\dsp\frac{\Big[x_{\nu}(s)-x_{\nu}(z)\Big]^{(m+1)}}{\Big[x_{\nu-1}(s+1)-x_{\nu-1}(z)\Big]^{(m)}}=\Big[x_{\nu-m}(s)-x_{\nu-m}(z)\Big].\label{3}	
\end{eqnarray}

\end{prop}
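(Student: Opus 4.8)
The plan is to prove all three identities by substituting the closed form \refe{generalizedpowerq-quadraticlattice} into both sides and simplifying; no structural input is needed beyond the elementary relation $(a;q)_{\infty}=(1-a)(aq;q)_{\infty}$, which for a non-negative integer $n$ gives $(a;q)_{\infty}/(aq^{n};q)_{\infty}=\prod_{j=0}^{n-1}(1-aq^{j})$, so that any ratio of two infinite $q$-products whose bases differ by an integer power of $q$ collapses to a finite product. Before starting, I would record how the substitutions occurring in \refe{1}--\refe{3} act on the four ingredients of \refe{generalizedpowerq-quadraticlattice} --- the power $c_{1}^{\alpha}(q)$, the power $q^{-\alpha(s+\nu/2)}$, and the two pairs of $q$-products: replacing $z$ by $z-k$ sends $s-z\mapsto s-z+k$ and $s+z+\nu\mapsto s+z+\nu-k$; lowering the lattice order $\nu$ to $\nu-1$ sends $s+\nu/2\mapsto s+\nu/2-\tfrac12$ and every argument of the form $s+z+\nu$ to $s+z+\nu-1$; replacing $s$ by $s+m$ sends $s-z\mapsto s-z+m$ and $s+z+\nu\mapsto s+z+\nu+m$.

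Identity \refe{1} is then immediate. Writing its left-hand side out with \refe{generalizedpowerq-quadraticlattice}, the factors $(q^{s-z};q)_{\infty}$ and $(\eta q^{s+z+\nu+1};q)_{\infty}$ are common to numerator and denominator and cancel, leaving
\[
c_{1}^{m-k}(q)\,q^{-(m-k)(s+\nu/2)}\,\frac{(q^{s-z+k};q)_{\infty}}{(q^{s-z+m};q)_{\infty}}\,\frac{(\eta q^{s+z+\nu-m+1};q)_{\infty}}{(\eta q^{s+z+\nu-k+1};q)_{\infty}},
\]
and this is exactly what one obtains by putting $z\mapsto z-k$ and $\alpha\mapsto m-k$ in \refe{generalizedpowerq-quadraticlattice}; the hypothesis $m\ge k$ guarantees that the exponent $m-k$ is a non-negative integer, so both descriptions are legitimate generalized powers. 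Hence the quotient equals $\big[x_{\nu}(s)-x_{\nu}(z-k)\big]^{(m-k)}$, which is \refe{1}.

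Identities \refe{2} and \refe{3} are the substantive ones. For \refe{2} I would evaluate the quotient using \refe{generalizedpowerq-quadraticlattice}, applying simultaneously the ``lower the order'' rule ($\nu\to\nu-1$ in the denominator) and, in the target expression, the combined substitution $\nu\to\nu-m$, $s\to s+m$ with $\alpha=1$; after the three surviving common $q$-products cancel, exactly one linear factor of each type is left, and one recognizes it as the first-power generalized power $\big[x_{\nu-m}(s+m)-x_{\nu-m}(z)\big]$, namely $c_{1}(q)\,q^{-(s+\nu/2)-m/2}(1-q^{s-z+m})(1-\eta q^{s+z+\nu})$. Identity \refe{3} is handled in the same way, with the argument shift $s\to s+1$ in the denominator and order $\nu-m$, argument $s$ in the target, and it can alternatively be deduced from \refe{2} combined with \refe{1}. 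I expect the main obstacle to be purely bookkeeping: in \refe{2}--\refe{3} three shifts act at once (the exponent, the lattice order, and the argument $s$), so it is easy to lose a half-integer in the power of $q$ or an offset in the arguments of the $q$-products; I would guard against this by matching the two sides column by column --- verifying separately (a) the total power of $c_{1}(q)$, (b) the total power of $q$, (c) the surviving numerator $q$-product and (d) the surviving denominator $q$-product --- so that each equality becomes a one-line check. Finally, the identical statement for the quadratic lattice \refe{quadraticlattice}, with \refe{generalizedpowerquadraticlattice} in place of \refe{generalizedpowerq-quadraticlattice}, follows by the same computation using $\Gamma(x+1)=x\,\Gamma(x)$ instead of $(a;q)_{\infty}=(1-a)(aq;q)_{\infty}$, and is also recovered as the $q\to1$ limit of the $q$-quadratic case.
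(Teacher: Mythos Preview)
Your approach is correct and coincides with the paper's: the paper simply states that the proof is straightforward by substituting the explicit formulas \refe{generalizedpowerquadraticlattice} and \refe{generalizedpowerq-quadraticlattice} and omits the details, which is precisely the direct verification you carry out. Your column-by-column bookkeeping for \refe{2} and \refe{3} is a sensible way to organize the calculation, but there is no methodological difference from what the paper intends.
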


The proof is straightforward  using \refe{generalizedpowerquadraticlattice} and 
\refe{generalizedpowerq-quadraticlattice}, hence it is omitted.
The generalization of the above expressions can be written with the following lemma.

\begin{lemma}\label{latticelemma}
Let $\mu_i$ and $\nu_i$, $i=1,2,3$ be complex numbers such that 
the differences $\nu_i-\nu_j$ and $\mu_i-\mu_j$ $i,j=1,2,3$
are integers and 
\bq
\mu_0-\mu_i\geq \nu_0-\nu_i
\eq
where $\nu_0$ is the $\nu_i$, $i=1,2,3$ with the largest real part, and
$\mu_0$ is the $\mu_i$, $i=1,2,3$ with the largest real part.
Then, the ratio of the generalized power can be calculated with
the following formulas:

\mbox{1. If $\nu_i=\nu_0$}

\[ 
\dsp\frac{\Big[x_{\nu_0}(s)-x_{\nu_0}(z)\Big]^{(\mu_0+1)}}{\Big[x_{\nu_i}(s)-x_{\nu_i}(z)\Big]^{(\mu_i+1)}} =
         \Big[x_{\nu_0}(s)-x_{\nu_0}(z-\mu_i-1)\Big]^{(\mu_0-\mu_i)}.     
				\]
				
\mbox{2. If  $\nu_0-\nu_i>0$ and $\mu_0-\mu_i=n$, $\nu_0-\nu_i=n$}
				\[
				\dsp\frac{\Big[x_{\nu_0}(s)-x_{\nu_0}(z)\Big]^{(\mu_0+1)}}{\Big[x_{\nu_i}(s)-x_{\nu_i}(z)\Big]^{(\mu_i+1)}} =
        \dsp\prod_{i=0}^{n-1}\Big[x_{\nu_0-\mu_0}(s+\mu_0-i)-x_{\nu_0-\mu_0}(z)\Big].
				\]

\mbox{3. If  $\nu_0-\nu_i>0$ and $\mu_0-\mu_i=n$, $\nu_0-\nu_i=n-k$, $(\nu_0-\nu_i<n)$}
\begin{multline*}
				\dsp\frac{\Big[x_{\nu_0}(s)-x_{\nu_0}(z)\Big]^{(\mu_0+1)}}{\Big[x_{\nu_i}(s)-x_{\nu_i}(z)\Big]^{(\mu_i+1)}} =
\dsp\prod_{l=0}^{n-k-1}\Big[x_{\nu_0-\mu_0}(s+\mu_0-l)-x_{\nu_0-\mu_0}(z)\Big]\\
\times\dsp\prod_{j=0}^{k-1}\Big[x_{\nu_i}(s)-x_{\nu_i}(z-\mu_0+n-1-j)\Big]. 
\end{multline*}
\end{lemma}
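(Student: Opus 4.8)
The plan is to reduce each of the three cases of Lemma \ref{latticelemma} to a repeated application of Proposition \ref{latticeprop}, treating the generalized powers as a purely formal calculus governed by the explicit product/Gamma-function formulas \refe{generalizedpowerquadraticlattice} and \refe{generalizedpowerq-quadraticlattice}. Since the statement is an identity of ratios of generalized powers, it suffices to verify it for the $q$-quadratic lattice \refe{q-quadraticlattice} (the quadratic lattice \refe{quadraticlattice} then follows by the standard limit $q\to 1$, or by an entirely parallel computation with \refe{generalizedpowerquadraticlattice}), and I would remark this reduction at the outset so as not to duplicate the argument.

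For Case 1 ($\nu_i=\nu_0$), I would simply invoke \refe{1} of Proposition \ref{latticeprop} with the substitutions $\nu\mapsto\nu_0$, $m\mapsto\mu_0+1$, $k\mapsto\mu_i+1$: the hypothesis $\mu_0-\mu_i\geq\nu_0-\nu_i=0$ guarantees $m\geq k$, and the formula gives exactly $\big[x_{\nu_0}(s)-x_{\nu_0}(z-\mu_i-1)\big]^{(\mu_0-\mu_i)}$. For Case 2 ($\nu_0-n=\nu_i$, $\mu_0-n=\mu_i$), the idea is to peel off one factor at a time by iterating \refe{2}: one application of \refe{2} with $m\mapsto\mu_0$ rewrites $\big[x_{\nu_0}(s)-x_{\nu_0}(z)\big]^{(\mu_0+1)}\big/\big[x_{\nu_0-1}(s)-x_{\nu_0-1}(z)\big]^{(\mu_0)}$ as the single factor $\big[x_{\nu_0-\mu_0}(s+\mu_0)-x_{\nu_0-\mu_0}(z)\big]$; repeating this $n$ times, lowering both the lattice index and the power by one at each step, produces the telescoping product $\prod_{i=0}^{n-1}\big[x_{\nu_0-\mu_0}(s+\mu_0-i)-x_{\nu_0-\mu_0}(z)\big]$, which terminates precisely when the index has dropped to $\nu_i=\nu_0-n$ and the power to $\mu_i=\mu_0-n$. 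I would present this as a short induction on $n$, the base case $n=1$ being \refe{2} itself. Care is needed to check that the argument of $x$ in the $i$-th extracted factor is $s+\mu_0-i$ and not, say, $s+\mu_i-i$; tracking this bookkeeping is where a sign or index slip is most likely.

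Case 3 is the genuinely combined one and will be the main obstacle, because the index of the denominator decreases by a different amount ($n-k$) than its power ($n$); so the telescoping of Case 2 stops early, after $n-k$ steps, leaving a residual ratio of generalized powers with \emph{equal} lattice index $\nu_i=\nu_0-(n-k)$ but unequal powers, which is then handled by Case 1 (equivalently by \refe{1}). Concretely I would first iterate \refe{2} exactly $n-k$ times to split off $\prod_{l=0}^{n-k-1}\big[x_{\nu_0-\mu_0}(s+\mu_0-l)-x_{\nu_0-\mu_0}(z)\big]$ and reduce the problem to evaluating $\big[x_{\nu_i}(s)-x_{\nu_i}(z)\big]^{(\mu_0-n+k+1)}\big/\big[x_{\nu_i}(s)-x_{\nu_i}(z)\big]^{(\mu_i+1)}$ with $\mu_i=\mu_0-n$; then apply \refe{1} with $m\mapsto\mu_0-n+k+1$, $k\mapsto\mu_i+1=\mu_0-n+1$, giving the shifted generalized power $\big[x_{\nu_i}(s)-x_{\nu_i}(z-\mu_0+n-1)\big]^{(k)}$, which I would finally expand into $\prod_{j=0}^{k-1}\big[x_{\nu_i}(s)-x_{\nu_i}(z-\mu_0+n-1-j)\big]$ using \refe{2} once more (or directly from the definition, since $[u]^{(k)}=\prod_{j=0}^{k-1}[u\text{ evaluated at shifted }z]$). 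The two delicate points are: (i) verifying that the two product ranges in the statement match the exponents produced — $n-k$ factors from the first stage and $k$ factors from the second, totalling $n$, consistent with the overall power drop from $\mu_0+1$ to $\mu_i+1$; and (ii) confirming that the hypothesis $\mu_0-\mu_i\geq\nu_0-\nu_i$ is exactly what is needed for every intermediate power to stay nonnegative so that each invocation of Proposition \ref{latticeprop} is legitimate. I would close by noting that all manipulations are reversible and purely algebraic in the Gamma-function (resp. $q$-shifted-factorial) representation, so the same chain of equalities proves the quadratic-lattice case verbatim.
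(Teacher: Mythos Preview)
Your proposal is correct and follows essentially the same route as the paper's proof: the paper sketches only Case~3, writing the ratio as a telescoping chain that first applies \refe{2} of Proposition~\ref{latticeprop} $n-k$ times (lowering both $\nu$ and $\mu$) and then applies \refe{1} $k$ times (lowering only $\mu$), exactly as you describe. Your write-up is in fact more detailed than the paper's sketch; the only minor slip is that the final expansion of $[x_{\nu_i}(s)-x_{\nu_i}(z-\mu_0+n-1)]^{(k)}$ into $k$ linear factors uses iterated \refe{1}, not \refe{2}, but you already note the alternative of appealing directly to the definition.
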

\begin{proof}
At this stage, we only sketch the proof for the 3rd case, and
the others can be done in an analogous way.
One can write the ratio of the generalized power in the 3rd case by 
\begin{eqnarray*}
				\!\!\!\!\!\!\!\!\!&\!\!\!\!\!\!\!\!\!&\!\!\!\!\!\!\!\!\!\dsp\frac{\Big[x_{\nu_0}(s)-x_{\nu_0}(z)\Big]^{(\mu_0+1)}}{\Big[x_{\nu_i}(s)-x_{\nu_i}(z)\Big]^{(\mu_i+1)}}=
				\frac{\Big[x_{\nu_0}(s)-x_{\nu_0}(z)\Big]^{(\mu_0+1)}}{\Big[x_{\nu_0-1}(s)-x_{\nu_0-1}(z)\Big]^{(\mu_0)}} 
				\frac{\Big[x_{\nu_0-1}(s)-x_{\nu_0-1}(z)\Big]^{(\mu_0)}}{\Big[x_{\nu_0-2}(s)-x_{\nu_0-2}(z)\Big]^{(\mu_0-1)}} \\
											\!\!\!\!\!\!\!\!\!&\!\!\!\!\!\!\!\!\!&\!\!\!\!\!\!\!\!\!\times \frac{\Big[x_{\nu_0-2}(s)-x_{\nu_0-2}(z)\Big]^{(\mu_0-1)}}{\Big[x_{\nu_0-3}(s)-x_{\nu_0-3}(z)\Big]^{(\mu_0-2)}} ...\frac{\Big[x_{\nu_0-n+k+1}(s)-x_{\nu_0-n+k+1}(z)\Big]^{(\mu_0-n+k+2)}}{\Big[x_{\nu_0-n+k}(s)-x_{\nu_0-n+k}(z)\Big]^{(\mu_0-n+k+1)}} \\
											\!\!\!\!\!\!\!\!\!&\!\!\!\!\!\!\!\!\!&\!\!\!\!\!\!\!\!\!\times		\frac{\Big[x_{\nu_0-n+k}(s)-x_{\nu_0-n+k}(z)\Big]^{(\mu_0-n+k+1)}}{\Big[x_{\nu_0-n+k}(s)-x_{\nu_0-n+k}(z)\Big]^{(\mu_0-n+k)}}...
											\frac{\Big[x_{\nu_0-n+k}(s)-x_{\nu_0-n+k}(z)\Big]^{(\mu_0-n+2)}}{\Big[x_{\nu_0-n+k}(s)-x_{\nu_0-n+k}(z)\Big]^{(\mu_0-n+1)}},
\end{eqnarray*}
where $\mu_0-\mu_i=n$, $\nu_0-\nu_i=n-k$. 
Then, from the hypothesis and the formulas \refe{1} and \refe{2} of Proposition \ref{latticeprop},
the results in the lemma follow.

\end{proof}


\section{Recurrence relation on the quadratic-type lattices}

Here, we obtain the general recurrence relation for the functions on the 
quadratic-type lattices defined by \refe{quadraticlattice} and \refe{q-quadraticlattice}.
To do so, we  generalize the idea used 
for the linear-type lattices in the recent papers \cite{ran3, RC1,RC2}. Next, we prove the following lemma as a generalization of the linear-type lattices considered in Lemma 3.2. of \cite[page 4]{RC2} for the quadratic-type lattices.

\begin{lemma}\label{relationlemma}
Let $x(z)$ be quadratic-type lattices of the form \refe{quadraticlattice} and \refe{q-quadraticlattice}.
Then, the following linear relation holds
\bq\label{linearrelation}
\sum_{i=1}^3A_i(z)\Psi_{\nu_i,\mu_i}(z)=0,
\eq
where the coefficients $A_i(z)$ are non-zero polynomial functions in $x(z)$ and
\bq\label{phi1}
\Psi_{\nu,\mu}(z)=\sum_{s=a}^{b-1}\frac{\rho_{\nu}(s)\nabla x_{\nu+1}(s)}{\Big[x_{\nu}(s)-x_{\nu}(z)\Big]^{(\mu+1)}}
\eq
provided that the differences 
$\nu_i-\nu_j$ and $\mu_i-\mu_j$ $i,j=1,2,3$
are integers such that $\mu_0-\mu_i\geq \nu_0-\nu_i$, $i=1,2,3,$ and the following condition holds
\bq\label{bc1}
\frac{\sigma(s)\rho_{\nu_*}(s)x^k(s)}{\Big[x_{\nu_0-1}(s)-x_{\nu_0-1}(z)\Big]^{(\mu_0)}}\Bigg|_{s=a}^b=0, \quad k=0,1,2,....
\eq
Where, $\nu_*$ is the $\nu_i$, $i=1,2,3$ with the smallest real part,
$\nu_0$ is the $\nu_i$, $i=1,2,3$ with the largest real part and
$\mu_0$ is the $\mu_i$, $i=1,2,3$ with the largest real part.

\end{lemma}

\begin{proof}
By adding the function $\Psi_{\nu_i,\mu_i}$ defined by \refe{phi1} into the sum,
we have
\[
\begin{array}{ll}
\dsp\sum_{i=1}^3\!\!&A_i(z)\Psi_{\nu_i,\mu_i}(z)=\dsp\sum_{i=1}^3A_i(z)\sum_{s=a}^{b-1}\dsp\frac{\rho_{\nu_i}(s)\nabla x_{\nu_i+1}(s)}{\Big[x_{\nu_i}(s)-x_{\nu_i}(z)\Big]^{(\mu_i+1)}}\\
&=\dsp\sum_{s=a}^{b-1}\sum_{i=1}^3A_i(z)\dsp\frac{\rho_{\nu_i}(s)\nabla x_{\nu_i+1}(s)}{\Big[x_{\nu_i}(s)-x_{\nu_i}(z)\Big]^{(\mu_i+1)}}=\dsp\sum_{s=a}^{b-1}\dsp\frac{\rho_{\nu_*}(s)}{\Big[x_{\nu_0}(s)-x_{\nu_0}(z)\Big]^{(\mu_0+1)}}\times\\
&\dsp\left(\sum_{i=1}^3A_i(z)\frac{\rho_{\nu_i}(s)}{\rho_{\nu_*}(s)}\nabla x_{\nu_i+1}(s)\frac{\Big[x_{\nu_0}(s)-x_{\nu_0}(z)\Big]^{(\mu_0+1)}}{\Big[x_{\nu_i}(s)-x_{\nu_i}(z)\Big]^{(\mu_i+1)}}\right)

\end{array}
\]
where 
\bq\label{rhonu*}
\rho_{\nu_i}(s)=\phi(s+\nu_*)\phi(s+\nu_*+1)...\phi(s+\nu_i-1)\rho_{\nu_*}(s)
\eq
by the Pearson equation \refe{pearsonrhonu}. Thus, we have
\[
\dsp\sum_{i=1}^3A_i(z)\Psi_{\nu_i,\mu_i}(z)=
\dsp\sum_{s=a}^{b-1}\dsp\frac{\rho_{\nu_*}(s)}{\Big[x_{\nu_0}(s)-x_{\nu_0}(z)\Big]^{(\mu_0+1)}}\Pi(s)
\]
where
\begin{eqnarray}\label{pi}
\Pi(s)&=&
\dsp\sum_{i=1}^3A_i(z)\nabla x_{\nu_i+1}(s)\frac{\Big[x_{\nu_0}(s)-x_{\nu_0}(z)\Big]^{(\mu_0+1)}}{\Big[x_{\nu_i}(s)-x_{\nu_i}(z)\Big]^{(\mu_i+1)}}\times\\
&&\phi(s+\nu_*)\phi(s+\nu_*+1)...\phi(s+\nu_i-1)\nonumber
\end{eqnarray}
where the ratio of the generalized power can be computed using Lemma \ref{latticelemma}.

We need to show that there exists a polynomial $Q(s)$ in $z=q^s, s\in\mathbb{Z}$; $\{1,z^{-1},z,z^{-2}, z^2,...\}$ satisfying
\bq\label{Q}
\frac{\rho_{\nu_*}(s)}{\Big[x_{\nu_0}(s)-x_{\nu_0}(z)\Big]^{(\mu_0+1)}}\Pi(s)=\Delta\left[
\frac{\sigma(s)\rho_{\nu_*}(s)}{\Big[x_{\nu_0-1}(s)-x_{\nu_0-1}(z)\Big]^{(\mu_0)}}Q(s)
\right].
\eq
If $Q(s)$ exists, then the sum in $s$ over $s=a$ to $b-1$,
together with the boundary condition \refe{bc1} lead to the relation \refe{linearrelation}.
By substituting the $q$-quadratic lattice $x(s)=c_1(q)q^s+c_2(q)q^{-s}+c_3(q)$ in each factors of $\Pi(s)$ in \refe{pi},
one can rewrite it as a polynomial in $z=q^s$ and $1/z=q^{-s}$, which 
is a special class of Laurent polynomials \cite{JN},
\[
\Lambda_{2n}=\{R\in\Lambda_{-n,n}| \, {\rm the \, coefficent \, of } \, z^n \, {\rm is \, nonzero}\}
\]
whose basis is $\{1,z^{-1},z,z^{-2},z^2,z^{-3},z^3,...\}$, where $z=q^s, s\in\mathbb{Z}$ and its L-degree is $2n.$

In order to prove the existence of the polynomial $Q(s)$, we rewrite the right hand-side of \refe{Q}
\[
\begin{array}{lll}
\!\!\!&\!\!\!\dsp\frac{\sigma(s+1)\rho_{\nu_*}(s+1)}{\Big[x_{\nu_0-1}(s+1)-x_{\nu_0-1}(z)\Big]^{(\mu_0)}}Q(s+1)-
\dsp\frac{\sigma(s)\rho_{\nu_*}(s)}{\Big[x_{\nu_0-1}(s)-x_{\nu_0-1}(z)\Big]^{(\mu_0)}}Q(s)=\\
\!\!\!&\!\!\!\dsp\frac{\rho_{\nu_*}(s)}{\Big[x_{\nu_0}(s)-x_{\nu_0}(z)\Big]^{(\mu_0+1)}}\left(
\sigma(s+1)\dsp\frac{\rho_{\nu_*}(s+1)}{\rho_{\nu_*}(s)}\frac{\Big[x_{\nu_0}(s)-x_{\nu_0}(z)\Big]^{(\mu_0+1)}}{\Big[x_{\nu_0-1}(s+1)-x_{\nu_0-1}(z)\Big]^{(\mu_0)}}\right.\\
\!\!\!&\!\!\!\times Q(s+1)-\left.\sigma(s)\dsp\frac{\Big[x_{\nu_0}(s)-x_{\nu_0}(z)\Big]^{(\mu_0+1)}}{\Big[x_{\nu_0-1}(s)-x_{\nu_0-1}(z)\Big]^{(\mu_0)}}Q(s)
\right).
\end{array}
\]
By using the Pearson equation \refe{pearsonrhonu} and  formulas
\refe{3} and \refe{2} of Proposition \ref{latticeprop}, respectively,
one gets
\begin{eqnarray*}
&&\frac{\rho_{\nu_*}(s)}{\Big[x_{\nu_0}(s)-x_{\nu_0}(z)\Big]^{(\mu_0+1)}}\Pi(s)=\frac{\rho_{\nu_*}(s)}{\Big[x_{\nu_0}(s)-x_{\nu_0}(z)\Big]^{(\mu_0+1)}}\Big\{\phi_{\nu_*}(s)\times\\
&&\Big[x_{\nu_0-\mu_0}(s)
-x_{\nu_0-\mu_0}(z)\Big]Q(s+1)-\sigma(s)\Big[x_{\nu_0-\mu_0}(s+\mu_0)-x_{\nu_0-\mu_0}(z)\Big]Q(s)\Big\}.
\end{eqnarray*}
Therefore,
\begin{eqnarray}\label{piQ}
\Pi(s)&=&\phi_{\nu_*}(s)\Bigg[x_{\nu_0-\mu_0}(s)-x_{\nu_0-\mu_0}(z)\Bigg]Q(s+1)\nonumber\\
&-&\sigma(s)\Bigg[x_{\nu_0-\mu_0}(s+\mu_0)-x_{\nu_0-\mu_0}(z)\Bigg]Q(s),
\end{eqnarray}
where $\phi_{\nu_*}(s)=\sigma(s)+\tau_{\nu_*}(s)\nabla x_{\nu_*+1}(s)$.

Recall that $\Pi(s)$ is a Laurent polynomial on the basis $\{1,z^{-1},z,z^{-2},z^2,z^{-3},$ $z^3,...\}$, where $z=q^s$.
Then,  $\sigma(s)$ is a polynomial of the degree at most two in $x(s)$ and also a Laurent polynomial of L-degree at most four,
whose basis is $\{1,z^{-1},z,z^{-2},z^2\}$, where $z=q^s$.
Moreover, $\tau_{\nu_*}(s)$ is a polynomial of degree one in $x_{\nu_*}(s)$ and also a Laurent polynomial of L-degree two, 
whose basis is $\{1,z^{-1},z\}$, where $z=q^s$. In addition, $x_k(s)$ is a Laurent polynomial of L-degree two,
whose basis is $\{1,z^{-1},z\}$, where $z=q^s$.

Therefore, by substituting the q-quadratic lattice \refe{q-quadraticlattice} and taking into account property \refe{xk}, one can see that $Q(s)$
is also a Laurent polynomial, whose L-degree is at least six less than the L-degree of $\Pi(s)$.

Note that two Laurent polynomials are equal if their coefficients are the same just like the case with the ordinary polynomials.
Then, one can use the equality of 
the coefficents of the Laurent polynomials
in order to find $A_i(z)$. This completes the proof.

\end{proof}

In the limit case as $q\rightarrow 1$, one can also get the results
of Lemma \ref{relationlemma} for the quadratic lattice $x(s)=c_1s^2+c_2s+c_3$.

\subsection{Examples}
In this part, we construct several recurrence relations in order to show how Lemma \ref{relationlemma} works.

\begin{ex}\label{ex7}
The functions $\Psi_{\nu,\nu}$, $\Psi_{\nu,\nu-1}$ and $\Psi_{\nu,\nu-2}$ are connected by the following relation
\[
A_1(z)\Psi_{\nu,\nu}(z)+A_2(z)\Psi_{\nu,\nu-1}(z)+A_3(z)\Psi_{\nu,\nu-2}(z)=0
\]
where the coefficients $A_1(z)$, $A_2(z)$ and $A_3(z)$ are the functions in $z$

\begin{eqnarray*}
A_1(z)&=&\tau_{\nu}(0)\beta_{\nu}-\gamma_{\nu}\widetilde{\sigma}_{\nu}(0)-\tau_{\nu}(0)x(z)\\
&+&\big[\tau_{\nu}'\beta_{\nu}+\alpha_{\nu}\tau_{\nu}(0)-\gamma_{\nu}\widetilde{\sigma}_{\nu}'(0)-\tau_{\nu}'x(z)\big]x_{\nu}(z-\nu)\\
&+&\big[\tau_{\nu}'\alpha_{\nu}-\gamma_{\nu}\dsp\frac{\widetilde{\sigma}_{\nu}''}{2}\big]\big[2x_{\nu}(z-\nu)x_{\nu}(z-\nu+1)-x_{\nu}^2(z-\nu+1)\big],\\
A_2(z)&=&\tau_{\nu}'\beta_{\nu}+\alpha_{\nu}\tau_{\nu}(0)-\gamma_{\nu}\widetilde{\sigma}_{\nu}'(0)-\tau_{\nu}'x(z)\\
&+&\big[\tau_{\nu}'\alpha_{\nu}-\gamma_{\nu}\frac{\widetilde{\sigma}_{\nu}''}{2}\big]2x_{\nu}(z-\nu+1),\\
A_3(z)&=&\tau_{\nu}'\alpha_{\nu}-\gamma_{\nu}\frac{\widetilde{\sigma}_{\nu}''}{2}
\end{eqnarray*}
where $\alpha_{\nu}$, $\beta_{\nu}$ and $\gamma_{\nu}$ are defined by \refe{alphabetagammak}
and 
\begin{eqnarray}
\widetilde{\sigma}_{\nu}(s)&=&\frac{\widetilde{\sigma}_{\nu}''}{2}x_{\nu}^2(s)+\widetilde{\sigma}_{\nu}'(0)x_{\nu}(s)+\widetilde{\sigma}_{\nu}(0),\label{sigmatildenunotation}\\
\tau_{\nu}(s)&=&\tau'_{\nu}x_{\nu}(s)+\tau_{\nu}(0)\label{taununotation}
\end{eqnarray}
are the Taylor polynomial expansion of the functions $\widetilde{\sigma}_{\nu}(s)$ and $\tau_{\nu}(s)$
defined by \refe{sigmatildenu} and \refe{taunu}, respectively.
\end{ex}

\begin{proof}
By Lemma \ref{relationlemma}, we have $\nu_1=\nu$, $\nu_2=\nu$, $\nu_3=\nu$
and $\mu_1=\nu$, $\mu_2=\nu-1$, $\mu_3=\nu-2$. By formula \refe{pi}
\bq\label{pi7}
\begin{array}{ll}
\Pi(s)=&\Delta x_{\nu}(s-\frac 12)\Bigg\{A_3(z)x_{\nu}^2(s)+\Big[A_2(z)-2A_3(z)x_{\nu}(z-\nu+1)\Big]x_{\nu}(s)\\
& +\Big[A_1(z)-A_2(z)x_{\nu}(z-\nu)+A_3(z)x_{\nu}^2(z-\nu+1)\Big]\Bigg\}
\end{array}
\eq
and by \refe{piQ}
\bq\label{piQ7}
\Pi(s)=\phi_{\nu}(s)\big[x(s)-x(z)\big]Q(s+1)-\sigma(s)\big[x(s+\nu)-x(z)\big]Q(s).
\eq
Notice that if we add q-quadratic lattice \refe{q-quadraticlattice} into \refe{pi7} and use property \refe{xk}, then
$\Pi(s)$ in \refe{pi7} becomes a Laurent polynomial on the basis $\{1,z^{-1},z,z^{-2},$ $z^2,z^{-3},z^3\}$, where $z=q^s$.
Note that the L-degree of $\Pi(s)$ is six. Since the L-degree of $Q(s)$ is at least six less than $\Pi(s)$,
then the degree of $Q(s)$ becomes zero, i.e. $Q(s)=k$, where $k$ is a constant. Then, \refe{piQ7} can be rewritten as the following:

\begin{align*}
\Pi(s)&=k\Bigg\{\phi_{\nu}(s)x(s)-\sigma(s)x(s+\nu)-\big[\phi_{\nu}(s)-\sigma(s)\big]x(z)\Bigg\}\\
&=k\Bigg\{\frac 12\phi_{\nu}(s)x(s)-\frac 12\sigma(s)x(s+\nu)+\frac 12\phi_{\nu}(s)x(s)-\frac 12\sigma(s)x(s+\nu)\\
&\quad+\frac 12\phi_{\nu}(s)x(s+\nu)-\frac 12\phi_{\nu}(s)x(s+\nu)+\frac 12\sigma(s)x(s)-\frac 12\sigma(s)x(s)\\
&\quad-\big[\phi_{\nu}(s)-\sigma(s)\big]x(z)\Bigg\}.
\end{align*}
Choosing $k=1$, the above expression becomes
\begin{align*}
\Pi(s)&=\phi_{\nu}(s)\dsp\frac{x(s+\nu)+x(s)}{2}-\sigma(s)\dsp\frac{x(s+\nu)+x(s)}{2}\\
&\quad-\phi_{\nu}(s)\dsp\frac{x(s+\nu)-x(s)}{2}-\sigma(s)\dsp\frac{x(s+\nu)-x(s)}{2}\\
&\quad-\big[\phi_{\nu}(s)-\sigma(s)\big]x(z).
\end{align*}
Then, we have
\begin{align*}
\Pi(s)&=\Big[\phi_{\nu}(s)-\sigma(s)\Big]\dsp\frac{x(s+\nu)+x(s)}{2}\\
&\quad-\Big[\phi_{\nu}(s)+\sigma(s)\Big]\dsp\frac{x(s+\nu)-x(s)}{2}\\
&\quad-\big[\phi_{\nu}(s)-\sigma(s)\big]x(z).
\end{align*}
By using expressions \refe{phinu+sigma}, \refe{phinu-sigma},
\refe{latticeprop1} and \refe{latticeprop2}, we get
\[
\Pi(s)=\Delta x_{\nu}(s-\frac 12)\Bigg\{\tau_{\nu}(s)\Big[\alpha_{\nu}x_{\nu}(s)+\beta_{\nu}\Big]
-\gamma_{\nu}\widetilde{\sigma}_{\nu}(s)-\tau_{\nu}(s)x(z)\Bigg\}.
\]
Using $\widetilde{\sigma}_{\nu}(s)$ and $\tau_{\nu}(s)$ 
from \refe{sigmatildenunotation} and \refe{taununotation}, it follows
\bq\label{pi72}
\begin{array}{ll}
\Pi(s)=\Delta x_{\nu}(s-\frac 12)&\Bigg\{\Big[\tau_{\nu}'\alpha_{\nu}-\gamma_{\nu}\frac{\widetilde{\sigma}_{\nu''}}{2}\Big]
x_{\nu}^2(s)
+\Big[\tau_{\nu}'\beta_{\nu}+\alpha_{\nu}\tau_{\nu}(0)-\gamma_{\nu}\widetilde{\sigma}_{\nu'}(0)\\
&-\tau_{\nu}'x(z)\Big]x_{\nu}(s)
+\Big[\tau_{\nu}(0)\beta_{\nu}-\gamma_{\nu}\widetilde{\sigma}_{\nu}(0)-\tau_{\nu}(0)x(z)\Big]\Bigg\}.
\end{array}
\eq
Now, by equating the polynomials $\Pi(s)$
in \refe{pi7} and \refe{pi72}, we obtain the following system of equations
\begin{eqnarray*}
\!\!\!\!\!\!\!\!\!\!&\!\!\!\!\!\!\!\!\!\!&\!\!\!\!\!\!\!\!\!\!A_3(z)=\tau_{\nu}'\alpha_{\nu}-\gamma_{\nu}\frac{\widetilde{\sigma}_{\nu''}}{2}\\
\!\!\!\!\!\!\!\!\!\!&\!\!\!\!\!\!\!\!\!\!&\!\!\!\!\!\!\!\!\!\!A_2(z)-2A_3(z)x_{\nu}(z-\nu+1)=\tau_{\nu}'\beta_{\nu}+\alpha_{\nu}\tau_{\nu}(0)-\gamma_{\nu}\widetilde{\sigma}_{\nu'}(0)-\tau_{\nu}'x(z)\\
\!\!\!\!\!\!\!\!&\!\!\!\!\!\!\!\!\!\!\!\!&\!\!\!\!\!\!\!\!\!\!A_1(z)-A_2(z)x_{\nu}(z-\nu)+A_3(z)x_{\nu}^2(z-\nu+1)=\tau_{\nu}(0)\beta_{\nu}-\gamma_{\nu}\widetilde{\sigma}_{\nu}(0)-\tau_{\nu}(0)x(z).
\end{eqnarray*}
By solving this system, one can obtain the coefficients $A_1(z)$, $A_2(z)$ and $A_3(z)$.
\end{proof}

\begin{ex}\label{ex8}
The following relation holds
\[
A_1(z)\Psi_{\nu,\nu-1}(z)+A_2(z)\Psi_{\nu,\nu-2}(z)+A_3(z)\Psi_{\nu+1,\nu}(z)=0
\]
where  $A_1(z)$, $A_2(z)$ and $A_3(z)$ are the functions in $z$ 

\begin{eqnarray*}
A_1(z)&=&-\frac{\sigma(z-\nu+1)}{\nabla x_{\nu+1}(z-\nu+1)}\\
A_2(z)&=&\frac{1}{\gamma_{\nu-1}}\frac{1}{\Delta x(z)}\Bigg[\tau_{\nu}(z)-\frac{\sigma(z-\nu+1)}{\nabla x_{\nu+1}(z-\nu+1)}\Bigg]\\
A_3(z)&=&-\gamma_{\nu}.
\end{eqnarray*}
Here, $\gamma_{\nu}$ is defined by \refe{alphabetagammak}.

\end{ex}

\begin{proof}
By Lemma \ref{relationlemma}, we have $\nu_1=\nu$, $\nu_2=\nu$, $\nu_3=\nu+1$
and $\mu_1=\nu-1$, $\mu_2=\nu-2$, $\mu_3=\nu$. By formula \refe{pi},
\bq\label{pi8}
\begin{array}{lll}
\Pi(s)&=A_1(z)\nabla x_{\nu+1}(s)\big[x_{1}(s+\nu)-x_1(z)\big]\\
&+A_2(z)\nabla x_{\nu+1}(s)\big[x_{1}(s+\nu)-x_1(z)\big]\big[x_{\nu}(s)-x_{\nu}(z-\nu+1)\big]\\
&+A_3(z)\nabla x_{\nu+2}(s)\phi(s+\nu)
\end{array}
\eq
and by \refe{piQ},
\bq\label{piQ8}
\Pi(s)=\phi_{\nu}(s)\big[x_1(s)-x_1(z)\big]Q(s+1)-\sigma(s)\big[x_1(s+\nu)-x_1(z)\big]Q(s).
\eq
Notice that if we use q-quadratic lattice \refe{q-quadraticlattice} and property \refe{xk}, then 
$\Pi(s)$ in \refe{pi8} becomes a Laurent polynomial, whose basis is $\{1,z^{-1},z,z^{-2},z^2,z^{-3},z^3\}$, where $z=q^s$.
The L-degree of $\Pi(s)$ is six. Since the L-degree of $Q(s)$ is at least six less than $\Pi(s)$,
then degree of $Q(s)$ becomes zero, i.e. $Q(s)=k$, where k is a constant. Let us choose $k=1.$ 

We remark here that since $\Pi(s)$ in \refe{pi8} and \refe{piQ8} are Laurent polynomials whose basis are $\{1,z^{-1},z,z^{-2},z^2,z^{-3},z^3\}$, where $z=q^s$,
 one can find the coefficients $A_i(z)$ by equating them. Here, we consider giving particular values to make some terms of $\Pi(s)$ in \refe{pi8} or \refe{piQ8} zero.
Therefore, it becomes simpler to determine coefficients $A_i(z)$.
Firstly, let $s=z-\nu$ in $\Pi(s)$ be defined by \refe{pi8} and \refe{piQ8}.
Notice that the first two terms in \refe{pi8} and the second term of \refe{piQ8} vanish, leading to
\[
\Pi(z-\nu)=A_3(z)\phi(z)\nabla x_{\nu+2}(z-\nu)=\phi_{\nu}(z-\nu)\big[x_1(z-\nu)-x_1(z)\big],
\]
where $\phi_{\nu}(z-\nu)=\phi(z)$ by \refe{phinuphi} and
$x_1(z-\nu)-x_1(z)=-\gamma_{\nu}\nabla x_{\nu+2}(z-\nu)$ by \refe{latticeprop2} with \refe{xk}.
Then, one gets
\[
A_3(z)=-\gamma_{\nu}.
\]

In order to find $A_1(z)$ let $s=z-\nu+1$ in $\Pi(s)$ defined by \refe{pi8} and \refe{piQ8}.
Notice that the second term of \refe{pi8} vanishes and gives
\begin{eqnarray*}
\Pi(z-\nu+1)&=&A_1(z)\nabla x_{\nu+1}(z-\nu+1)\big[x_1(z+1)-x_1(z)\big]\\
&+&A_3(z)\phi(z+1)\nabla x_{\nu+2}(z-\nu+1)\\
&=&\phi_{\nu}(z-\nu+1)\big[x_1(z-\nu+1)-x_1(z)\big]\\
&-&\sigma(z-\nu+1)\big[x_1(z+1)-x_1(z)\big]
\end{eqnarray*}
where $\phi_{\nu}(z-\nu+1)=\phi(z+1)$ by \refe{phinuphi} and
$x_1(z+1)-x_1(z)=\Delta x_1(z)=\Delta x(z+\frac 12)$ by the forward operator with \refe{xk}.
Moreover, $x_1(z-\nu+1)-x_1(z)=-\gamma_{\nu}\nabla x_{\nu+2}(z-\nu+1)$ by \refe{latticeprop2}.
Replacing $A_3(z)=-\gamma_{\nu}$, one has
\[
A_1(z)=-\frac{\sigma(z-\nu+1)}{\nabla x_{\nu+1}(z-\nu+1)}.
\]

Finally, to find $A_2(z)$ we set $s=z$ in $\Pi(s)$
defined by \refe{pi8} and \refe{piQ8}.
Notice that first term of \refe{piQ8} disappears and leads to
\begin{eqnarray*}
\Pi(z)&=&A_1(z)\nabla x_{\nu+1}(z)\big[x_{1}(z+\nu)-x_1(z)\big]\\
&+&A_2(z)\nabla x_{\nu+1}(z)\big[x_{1}(z+\nu)-x_1(z)\big]\big[x_{\nu}(z)-x_{\nu}(z-\nu+1)\big]\\
&+&A_3(z)\nabla x_{\nu+2}(z)\phi(z+\nu)=-\sigma(z)\big[x_1(z+\nu)-x_1(z)\big],
\end{eqnarray*}
where $\nabla x_{\nu+2}(z)=\nabla x_{\nu}(z+1)=\Delta x_{\nu}(z)$ by \refe{forwardbackward} with \refe{xk} and
 $x_1(z+\nu)-x_1(z)=\gamma_{\nu}\Delta x_{\nu}(z)$, $x_{\nu}(z)-x_{\nu}(z-\nu+1)=\gamma_{\nu-1}\Delta x(z)$ 
by \refe{latticeprop2}, \refe{forwardbackward} with \refe{xk}.
Then, with the help of \refe{phinuphi} together with \refe{phinu-sigma} and \refe{forwardbackward}, one can have
\[
A_2(z)=\frac{1}{\gamma_{\nu-1}}\frac{1}{\Delta x(z)}\Bigg[\tau_{\nu}(z)-\frac{\sigma(z-\nu+1)}{\nabla x_{\nu+1}(z-\nu+1)}\Bigg],
\]
which completes the proof. The other proofs can be made by using the same method. Thus, we do not include them here.

\end{proof}
\begin{ex}\label{ex9}
The functions $\Psi_{\nu,\nu}$, $\Psi_{\nu,\nu-1}$ and $\Psi_{\nu+1,\nu+1}$ have the following relation
\[
A_1(z)\Psi_{\nu,\nu}(z)+A_2(z)\Psi_{\nu,\nu-1}(z)+A_3(z)\Psi_{\nu+1,\nu+1}(z)=0
\]
where coefficients $A_1(z)$, $A_2(z)$ and $A_3(z)$ are the functions in $z$

\begin{eqnarray*}
\!\!\!\!&\!\!\!\!\!\!\!\!&\!\!\!\!A_1(z)=\frac{\phi(z)}{\Delta x(z)}\Big[-\gamma_{\nu}+\gamma_{\nu+1}\frac{\nabla x_{\nu+2}(z-\nu)}{\nabla x_{\nu+1}(z-\nu)}\Big]-\frac{\sigma(z-\nu)}{\nabla x_{\nu+1}(z-\nu)}\\
\!\!\!\!&\!\!\!\!\!\!\!\!&\!\!\!\!A_2(z)=\frac{1}{\gamma_{\nu}}\frac{\tau_{\nu}(z)-A_1(z)}{\Delta x(z-\frac 12)}\\
\!\!\!\!&\!\!\!\!\!\!\!\!&\!\!\!\!A_3(z)=-\gamma_{\nu+1}
\end{eqnarray*}
where $\gamma_{\nu}$ is defined by \refe{alphabetagammak}.

\end{ex}

\begin{ex}\label{ex12}
The functions $\Psi_{\nu,\nu+1}$, $\Psi_{\nu-1,\nu}$ and $\Psi_{\nu-1,\nu-1}$ hold the relation that follows
\[
A_1(z)\Psi_{\nu,\nu+1}(z)+A_2(z)\Psi_{\nu-1,\nu}(z)+A_3(z)\Psi_{\nu-1,\nu-1}(z)=0
\]
where coefficients $A_1(z)$, $A_2(z)$ and $A_3(z)$ are the functions in $z$

\begin{eqnarray*}
A_1(z)&=&-\gamma_{\nu+1}\\
A_2(z)&=&-\gamma_{\nu}\frac{\phi(z-1)}{\Delta x(z-\frac 12)}-\frac{\sigma(z-\nu)}{\nabla x_{\nu}(z-\nu)}\\
&+&\gamma_{\nu+1}\frac{\phi(z-1)\nabla x_{\nu+1}(z-\nu)}{\Delta x(z-\frac 12)\nabla x_{\nu}(z-\nu)}\\
A_3(z)&=&\frac{1}{\gamma_{\nu}}\frac{\tau_{\nu-1}(z)-A_2(z)}{\nabla x(z)}.\\
\end{eqnarray*}
Here, $\gamma_{\nu}$ is defined by \refe{alphabetagammak}.

\end{ex}

\begin{ex}\label{ex11}
The following relation holds
\[
A_1(z)\Psi_{\nu-1,\nu-1}(z)+A_2(z)\Psi_{\nu,\nu}(z)+A_3(z)\Psi_{\nu,\nu+1}(z)=0
\]
where $A_1(z)$, $A_2(z)$ and $A_3(z)$ are the functions in $z$

\begin{eqnarray*}
\!\!\!\!&\!\!\!\!\!\!\!\!&\!\!\!\!A_1(z)=\dsp\frac{1}{\gamma_{\nu}}\Bigg\{-\frac{\sigma(z)}{\nabla x(z)}+\frac{\phi(z+\nu-1)\nabla x_{\nu}(z-\nu)}{\nabla x_{\nu}(z)\nabla x(z)}\\
&+&\gamma_{\nu}\frac{\phi(z+\nu-1)\nabla x_{\nu}(z-\nu+1)}{\nabla x_{\nu}(z)\nabla x_{\nu+1}(z-\nu)}+\frac{\phi(z+\nu-1)\sigma(z-\nu)\Delta x(z-\frac 12)}{\phi(z-1)\nabla x_{\nu}(z)\nabla x_{\nu+1}(z-\nu)}\\
&-&\gamma_{\nu+1}\frac{\phi(z+\nu-1)}{\nabla x_{\nu}(z)}\Bigg\}\\
\!\!\!\!&\!\!\!\!\!\!\!\!&\!\!\!\!A_2(z)=\gamma_{\nu+1}-\gamma_{\nu}\frac{\nabla x_{\nu}(z-\nu+1)}{\nabla x_{\nu+1}(z-\nu)}-\frac{\sigma(z-\nu)\Delta x(z-\frac 12)}{\phi(z-1)\nabla x_{\nu+1}(z-\nu)}\\
\!\!\!\!&\!\!\!\!\!\!\!\!&\!\!\!\!A_3(z)=-\gamma_{\nu+1}\nabla x_{\nu}(z-\nu)
\end{eqnarray*}
where $\gamma_{\nu}$ is defined by \refe{alphabetagammak}.

\end{ex}

\begin{ex}\label{ex10}

$\Psi_{\nu,\nu}$, $\Psi_{\nu,\nu-1}$ and $\Psi_{\nu-1,\nu-1}$ are connected by
\[
A_1(z)\Psi_{\nu,\nu}(z)+A_2(z)\Psi_{\nu,\nu-1}(z)+A_3(z)\Psi_{\nu-1,\nu-1}(z)=0
\]
where coefficients $A_1(z)$, $A_2(z)$ and $A_3(z)$ are the functions in $z$

\begin{eqnarray*}
\!\!\!\!&\!\!\!\!\!\!\!\!&\!\!\!\!A_1(z)=-\gamma_{\nu}Q(z-\nu+1)\\
\!\!\!\!&\!\!\!\!\!\!\!\!&\!\!\!\!A_2(z)=\frac{C(z)}{D(z)}\\
\!\!\!\!&\!\!\!\!\!\!\!\!&\!\!\!\!A_3(z)=-\sigma(z)+\frac{\phi(z+\nu-1)Q(z-\nu+1)}{\nabla x_{\nu}(z)}-\frac{\phi(z+\nu-1)\nabla x(z+\frac 12)}{\nabla x_{\nu}(z)}\frac{C(z)}{D(z)},
\end{eqnarray*}

where 
\begin{eqnarray*}
C(z)&=&\frac{1}{\gamma_{\nu+1}}\phi(z+\nu)\Delta x(z)\nabla x_{\nu}(z)Q(z+2)\\
&-&\sigma(z+1)\nabla x_{\nu}(z)\nabla x_{\nu}(z+1) Q(z+1)\\
&+&\frac{\gamma_{\nu}}{\gamma_{\nu+1}}\phi(z+\nu)\nabla x_{\nu}(z)\nabla x_{\nu+1}(z+1) Q(z-\nu+1)\\
&+&\sigma(z)\nabla x_{\nu}(z+1)\nabla x_{\nu}(z+1) Q(z)\\
&-&\phi(z+\nu-1)\nabla x_{\nu}(z+1)\nabla x_{\nu}(z+1) Q(z-\nu+1),
\end{eqnarray*}
\begin{eqnarray*}
D(z)&=&\phi(z+\nu)\nabla x_{\nu}(z)\nabla x_{\nu+1}(z+1)\nabla x(z+1)\\
&-&\phi(z+\nu-1)\nabla x_{\nu}(z+1)\nabla x_{\nu}(z+1)\nabla x(z+\frac 12)
\end{eqnarray*}
and $\gamma_{\nu}$ is defined by \refe{alphabetagammak}. Here, $Q(z)$ is a first-degree polynomial in $x(z).$ In particular, considering $Q(z)=\nabla x_{\nu}(z)$ leads to the following formulas:

\begin{eqnarray*}
\!\!\!\!&\!\!\!\!\!\!\!\!&\!\!\!\!A_1(z)=-\gamma_{\nu}\nabla x_{\nu}(z-\nu+1)\\
\!\!\!\!&\!\!\!\!\!\!\!\!&\!\!\!\!A_2(z)=\frac{C(z)}{D(z)}\\
\!\!\!\!&\!\!\!\!\!\!\!\!&\!\!\!\!A_3(z)=-\sigma(z)+\frac{\phi(z+\nu-1)\nabla x_{\nu}(z-\nu+1)}{\nabla x_{\nu}(z)}-\frac{\phi(z+\nu-1)\nabla x(z+\frac 12)}{\nabla x_{\nu}(z)}\frac{C(z)}{D(z)},
\end{eqnarray*}
where 
\begin{eqnarray*}
C(z)&=&\frac{1}{\gamma_{\nu+1}}\phi(z+\nu)\Delta x(z)\nabla x_{\nu}(z)\nabla x_{\nu}(z+2)\\
&-&\sigma(z+1)\nabla x_{\nu}(z)\nabla x_{\nu}(z+1)\nabla x_{\nu}(z+1)\\
&+&\frac{\gamma_{\nu}}{\gamma_{\nu+1}}\phi(z+\nu)\nabla x_{\nu}(z)\nabla x_{\nu+1}(z+1)\nabla x_{\nu}(z-\nu+1)\\
&+&\sigma(z)\nabla x_{\nu}(z+1)\nabla x_{\nu}(z+1)\nabla x_{\nu}(z)\\
&-&\phi(z+\nu-1)\nabla x_{\nu}(z+1)\nabla x_{\nu}(z+1)\nabla x_{\nu}(z-\nu+1),
\end{eqnarray*}
\begin{eqnarray*}
D(z)&=&\phi(z+\nu)\nabla x_{\nu}(z)\nabla x_{\nu+1}(z+1)\nabla x(z+1)\\
&-&\phi(z+\nu-1)\nabla x_{\nu}(z+1)\nabla x_{\nu}(z+1)\nabla x(z+\frac 12).
\end{eqnarray*}

\end{ex}
Next section includes the recurrence relations for the solutions of the second-order linear difference equation of hypergeometric type \ref{diffeqn} using Lemma \ref{relationlemma}.

\section{Recurrence relations of the solutions of the second-order linear difference equation of hypergeometric type}
We first remark that the solution $y_{\nu}$ of the difference equation \ref{diffeqn} can be rewritten using the function $\Psi_{\nu,\mu}$ as
\bq\label{ynuz}
y_{\nu}(z)=\frac{C_{\nu}}{\rho(z)}\Psi_{\nu,\nu}(z).
\eq 
Here, we include the recurrence relations related with solutions $y_{\nu}$
and their difference derivatives as defined in \cite{sus1, sus2} by
\bq\label{ynuk}
y_{\nu}^{(k)}:=\Delta^{(k)}y_{\nu}(s)=\frac{C_{\nu}^{(k)}}{\rho_k(s)}\Psi_{\nu,\nu-k}(s)
\eq
where
\[
\Delta^{(k)}=\Bigg(\frac{\Delta}{\Delta x_{k-1}}\Bigg)...\Bigg(\frac{\Delta}{\Delta x_{1}}\Bigg)\Bigg(\frac{\Delta}{\Delta x_{0}}\Bigg),
\]
\bq\label{rhok}
\rho_k(s)=\sigma(s+1)\rho_{k-1}(s+1)=\rho(s+k)\prod_{i=1}^k\sigma(s+i),
\eq
\[
C_{\nu}^{(k)}=\Bigg[\alpha_{\nu-k}\widetilde{\tau}'_{k-1}+\gamma_{\nu-k}\frac{\widetilde{\sigma}''_{k-1}}{2}\Bigg]C_{\nu}^{(k-1)}=\kappa_{\nu+k-1}C_{\nu}^{(k-1)}=\prod_{i=0}^{k-1}\kappa_{\nu+i}C_{\nu},
\]
\[
\kappa_{\nu}=\alpha_{\nu-1}\widetilde{\tau}'+\gamma_{\nu-1}\frac{\widetilde{\sigma}''}{2}, \qquad C_{\nu}^{(0)}=C_{\nu}.
\]
The following theorem has been proved for the lineer-type lattices in \cite{ran3, RC1, RC2}. 
It is also valid for the quadratic-type lattices but together with the condition of Lemma \ref{relationlemma}.

\begin{theorem}\label{derivativerelationlemma}
The following linear relation holds
\bq\label{derivativerelation}
\sum_{i=1}^3B_i(s)y_{\nu_i}^{(k_i)}(s)=0,
\eq
by the conditions of Lemma \ref{relationlemma},
where
$$B_i(s)=A_i(s)(C_{\nu_i}^{(k_i)})^{-1}\phi(s+k_*)...\phi(s+k_i-1).$$
Here, $A_i(s)$ are the coefficient functions of the recurrence relations defined in Lemma \ref{relationlemma}.
\end{theorem}
\begin{proof}
By Lemma \ref{relationlemma}, there exist the functions $A_i(z)$, $i=1,2,3$
such that the following linear relation holds
\[
\sum_{i=1}^3A_i(s)\Psi_{\nu_i, \nu_i-k_i}(s)=0.
\]
Therefore, by the definition of the difference derivative \refe{ynuk}, we have
\[
\sum_{i=1}^3A_i(s)(C_{\nu_i}^{(k_i)})^{-1}\rho_{k_i}(s)y_{\nu_i}^{(k_i)}(s)=0,
\]
which can be rewritten as the following
\[
\sum_{i=1}^3B_i(s)y_{\nu_i}^{(k_i)}(s)=0, \quad B_i(s)=A_i(s)(C_{\nu_i}^{(k_i)})^{-1}\phi(s+k_*)...\phi(s+k_i-1),
\]
by dividing the equality with $\rho_{k_*}(s)$, where $k_*=\min\{k_1,k_2,k_3\}$ and then, using expression \refe{rhonu*},
which completes the proof.
\end{proof}

By the formula defined in \refe{ynuk}, the examples \ref{ex7}, \ref{ex8}, \ref{ex9} and \ref{ex10} 
lead to the following relations
\[
\begin{array}{llll}
&B_1(s)y_{\nu}(s)+B_2(s)y^{(1)}_{\nu}(s)+B_3(s)y^{(2)}_{\nu}(s)=0,\\
&B_1(s)y^{(1)}_{\nu}(s)+B_2(s)y^{(2)}_{\nu}(s)+B_3(s)y^{(1)}_{\nu+1}(s)=0,\\
&B_1(s)y_{\nu}(s)+B_2(s)y^{(1)}_{\nu}(s)+B_3(s)y_{\nu+1}(s)=0,\\
&B_1(s)y_{\nu}(s)+B_2(s)y^{(1)}_{\nu}(s)+B_3(s)y_{\nu-1}(s)=0.
\end{array}
\]
Notice that the last two relations are the so-called raising and lowering operators, respectively,
which are equivalent to the following $\Delta$-ladder-type recurrence relations, respectively,
	\bq\label{deltaladder+1}
	B_1(s)y_{\nu}(s)+B_2(s)\frac{\Delta y_{\nu}(s)}{\Delta x(s)}+B_3(s)y_{\nu+1}(s)=0,
	\eq
	\bq\label{deltaladder-1}
	\widetilde{B}_1(s)y_{\nu}(s)+\widetilde{B}_2(s)\frac{\Delta y_{\nu}(s)}{\Delta x(s)}+\widetilde{B}_3(s)y_{\nu-1}(s)=0.
	\eq

	Note that in order to get a $\Delta$-ladder-type recurrence relation, it is sufficient to put $k_1=0$, $k_2=1$, $k_3=0$ and $\nu_1=\nu$, $\nu_2=\nu$, $\nu_3=\nu+m$ into \refe{derivativerelation}, where $m=\mp1$.

\begin{corollary}

The following three-term recurrence relation holds
\[
B_1(s)y_{\nu}(s)+B_2(s)y_{\nu+1}(s)+B_3(s)y_{\nu-1}(s)=0,
\]
provided that the conditions in Lemma \ref{relationlemma} exist.
Here, coefficients $B_i(s)$, $i=1,2,3$ are the polynomial functions.
\end{corollary}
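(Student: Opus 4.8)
The plan is to obtain the relation as the specialization of Lemma~\ref{relationlemma} (equivalently, of Theorem~\ref{derivativerelationlemma} with $k_1=k_2=k_3=0$) in which no difference derivatives occur. Concretely, I would apply Lemma~\ref{relationlemma} to the three index pairs
\[
(\nu_1,\mu_1)=(\nu,\nu),\qquad (\nu_2,\mu_2)=(\nu+1,\nu+1),\qquad (\nu_3,\mu_3)=(\nu-1,\nu-1),
\]
so that the three functions $\Phi_{\nu_i,\mu_i}$ are exactly $\Phi_{\nu,\nu}$, $\Phi_{\nu+1,\nu+1}$ and $\Phi_{\nu-1,\nu-1}$; by \refe{ynuz} (i.e.\ \refe{ynuk} with $k=0$, $\rho_0=\rho$, $C_\nu^{(0)}=C_\nu$) these are precisely the functions attached to $y_\nu$, $y_{\nu+1}$ and $y_{\nu-1}$.

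First I would check that this triple meets the hypotheses of Lemma~\ref{relationlemma}. The differences $\nu_i-\nu_j$ and $\mu_i-\mu_j$ lie in $\{0,\pm1,\pm2\}$, hence are integers; moreover $\nu_0=\mu_0=\nu+1$ and $\nu_*=\nu-1$, and since $\mu_i=\nu_i$ for every $i$ one has $\mu_0-\mu_i=\nu_0-\nu_i$, so the condition $\mu_0-\mu_i\ge\nu_0-\nu_i$ holds with equality. With these values the surviving boundary term \refe{bc1} (resp.\ \refe{bc2}) reads
\[
\frac{\sigma(s)\rho_{\nu-1}(s)x^k(s)}{\big[x_{\nu}(s)-x_{\nu}(z)\big]^{(\nu+1)}}\bigg|_{s=a}^{b}=0,\qquad k=0,1,2,\dots,
\]
which is exactly the condition hypothesized in the statement. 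Lemma~\ref{relationlemma} then produces nonzero functions $A_1,A_2,A_3$ with $A_1(z)\Phi_{\nu,\nu}(z)+A_2(z)\Phi_{\nu+1,\nu+1}(z)+A_3(z)\Phi_{\nu-1,\nu-1}(z)=0$.

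To finish I would translate back to the $y_{\nu_i}$. By \refe{ynuz}, $\Phi_{\nu_i,\nu_i}(z)=\rho(z)\,C_{\nu_i}^{-1}y_{\nu_i}(z)$, and --- this is the point that makes the reduction clean --- the weight $\rho$ is the \emph{same} in all three terms, since the Pearson equation it satisfies, $\rho(s+1)/\rho(s)=\Phi(s)/\sigma(s+1)$, carries no $\nu$-dependence. Substituting, the common factor $\rho(z)$ cancels, and after renaming $A_i(z)\mapsto A_i(z)/C_{\nu_i}$ (still nonzero functions) and writing the argument as $s$ as in \refe{ynuk}, one obtains $A_1(s)y_\nu(s)+A_2(s)y_{\nu+1}(s)+A_3(s)y_{\nu-1}(s)=0$, as claimed. (Via Theorem~\ref{derivativerelationlemma} the same computation appears with $B_i(s)=A_i(s)C_{\nu_i}^{-1}$, since for $k_1=k_2=k_3=0$ the products $\Phi(s+k_*)\cdots\Phi(s+k_i-1)$ are empty.)

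There is no genuinely hard step: granted Lemma~\ref{relationlemma}, the proof is bookkeeping. The only two points needing care are (i) confirming that the chosen triple satisfies $\mu_0-\mu_i\ge\nu_0-\nu_i$ and identifying $\nu_*=\nu-1$, so that the assumed boundary condition is the correct one, and (ii) checking that $\rho$ has no hidden $\nu$-dependence, which is what permits the $\Phi$-identity to collapse to a clean three-term recurrence among the solutions $y_{\nu}$ themselves.
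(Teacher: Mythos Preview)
Your proposal is correct and follows essentially the same approach as the paper: the paper's proof consists of the single sentence ``By substituting $k_1=0$, $k_2=0$, $k_3=0$ and $\nu_1=\nu$, $\nu_2=\nu+1$, $\nu_3=\nu-1$ in \refe{derivativerelation}, one can obtain the above relation,'' which is exactly the specialization you carry out (you simply add the verification of the hypotheses and the passage from $\Phi_{\nu_i,\nu_i}$ to $y_{\nu_i}$ via \refe{ynuz}).
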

\begin{proof}
By substituting $k_1=0$, $k_2=0$, $k_3=0$ and $\nu_1=\nu$, $\nu_2=\nu+1$, $\nu_3=\nu-1$
in \refe{derivativerelation}, one can obtain the above relation.
\end{proof}


\section{Applications to polynomials on the quadratic-type lattices}

In this section, we include the applications of the method to the $q$-Racah and dual Hahn polynomials
which are defined by \refe{sumynu} with $\nu=n$. 
These polynomials are general and defined
on the $q$-quadratic lattices of the form $x(s) = q^{-s}+\delta \gamma q^{s+1}$
and the quadratic lattices of the form $x(s)=s(s+1)$, respectively.

One can find a detailed study on these polynomials in \cite{ran1, ks, NSU}.
Since the $q$-Racah and dual Hahn polynomials
are defined by \refe{sumynu} where $\nu=n$,
then condition \refe{sumynucondition} is satisfied. Therefore,
Lemma \ref{relationlemma} and Theorem \ref{derivativerelationlemma} hold for such polynomials.

In the following, we include the two types of recurrence relations consisting of the polynomials and their difference-derivatives.

\subsection{The application of the method to the $q$-Racah polynomials}
Let $\nu_1=n$, $\nu_2=n$, $\nu_3=n+1$ and $k_1=0$, $k_2=1$, $k_3=0$
in Theorem \ref{derivativerelationlemma}, then we get
\[
B_1(s)P_{n}(s)+B_2(s)\Delta^{(1)}P_{n}(s)+B_3(s)P_{n+1}(s)=0.
\]

\begin{eqnarray*}
\!\!\!\!&\!\!\!\!\!\!\!\!&\!\!\!\!B_1(s)=\frac{1}{C_n}\left\{\frac{\phi(s)}{\Delta x(s)}\Big[-\gamma_{n}+\gamma_{n+1}\frac{\nabla x_{n+2}(s-n)}{\nabla x_{n+1}(s-n)}\Big]-\frac{\sigma(s-n)}{\nabla x_{n+1}(s-n)}\right\}\\
\!\!\!\!&\!\!\!\!\!\!\!\!&\!\!\!\!B_2(s)=\frac{\phi(s)}{C_n^{(1)}}\frac{1}{\gamma_{n}}\frac{\tau_{n}(s)-C_nB_1(s)}{\Delta x(s-\frac 12)}\\
\!\!\!\!&\!\!\!\!\!\!\!\!&\!\!\!\!B_3(s)=-\frac{\gamma_{n+1}}{C_{n+1}}
\end{eqnarray*}
where $\gamma_{n}$ is defined by \refe{alphabetagammak} and $C_n^{(1)}=(\alpha_{n-1}\widetilde{\tau}'+\gamma_{n-1}\frac{\widetilde{\sigma}''}{2})C_n$. 
Notice that this case is considered in example 8 with $n=\nu$.
Placing the corresponding values of the $q$-Racah polynomials from Table 1
in coefficients $B_i(s)$, $i=1,2,3$, we have
\begin{eqnarray*}
B_1(s)&=&\frac{1}{C_n}\Big\{\frac{-q^{1/2}(q^{1/2}-q^{-1/2})q^s(q^{-s}-\alpha q)(q^{-s}-\gamma q)(1-\beta\delta q^{s+1})(1-\delta\gamma q^{s+1})}{(1-\delta\gamma q^{2s+2})}\\
&\times&\Big[-[n]_q+[n+1]_q\frac{q^{-1/2}(1-\delta\gamma q^{2s-n+2})}{(1-\delta\gamma q^{2s-n+1})}\Big]\\
&+&\frac{q^{2-n/2}(q^{1/2}-q^{-1/2})q^s(1-\delta q^{s-n})(1-q^{-s+n})(\alpha-\delta\gamma q^{s-n})(\gamma-\beta\gamma q^{-s+n})}{(1-\delta\gamma q^{2s-n+1})}\Big\},\\
B_2(s)&=&\frac{2(q^{1/2}-q^{-1/2})(q^{-s}-\alpha q)(q^{-s}-\gamma q)(1-\beta\delta q^{s+1})(1-\gamma\delta q^{s+1})}{C_n[n]_q\Big[(q^{(n-1)/2}-q^{-(n-1)/2})(1-\beta\gamma q^{2})+[n-1]_qq^{-n}(q^{1/2}-q^{-1/2})(1+\alpha\beta q^{2n+2})\Big]}\\
&\times&\frac{-(1-\alpha\beta q^{2n+2})x_n(s)+q^{-n/2}[(1-\alpha q^{n+1})(1-\beta\delta q^{n+1})(1-\gamma q^{n+1})\Big\}}{q^{-s}(1-\delta\gamma q^{2s+2})}\\
&-&\frac{(q^{1/2}-q^{-1/2})\Big\{(1-\delta\gamma q^{n+1})(1-\alpha \beta q^{2n+2})]\Big\}+C_nB_1(s)}{(q^{1/2}-q^{-1/2})q^{-s}(1-\delta\gamma q^{2s+2})},\\
B_3(s)&=&-\frac{[n+1]_q}{C_{n+1}}.
\end{eqnarray*}

\subsection{The application of the method to the dual Hahn polynomials}

Let $\nu_1=n-1$, $\nu_2=n$, $\nu_3=n+1$ and $k_1=0$, $k_2=1$, $k_3=0$
in Theorem \ref{derivativerelationlemma}, then we get
\[
B_1(s)P_{n-1}(s)+B_2(s)\Delta^{(1)}P_{n}(s)+B_3(s)P_{n+1}(s)=0.
\]

In order to obtain this relation, we use the following three-term recurrence relation (TTRR )
\begin{equation}\label{TTRR}
x(s)P_n(s)=\widetilde{\alpha}_nP_{n+1}(s)+\widetilde{\beta}_nP_n(s)+\widetilde{\gamma}_nP_{n-1}(s), \quad n=0, 1, 2, ...,
\end{equation}
with the initial conditions $P_0(s)=1,  P_{-1}(s)=0$, and
also the differentiation formula \cite[Eq. (5.67)]{ran1} (or
\cite[Eq. (25)]{RYR})
\begin{equation}\label{diff.formula2}
\phi(s)\frac{\Delta P_n(s)_q}{\Delta x(s)}=\widehat{\alpha}_n P_{n+1}(s)_q
+\widehat{\beta}_n(s)P_n(s)_q,
\end{equation}
where $\phi(s)=\sigma(s)+\tau(s)\Delta x(s-\frac{1}{2})$, and
$$
\widehat{\alpha}_n= -\frac{ \lambda_{n}}{[n]_q\widetilde{\tau}'_n}\frac{B_n}{B_{n+1}},\quad
\widehat{\beta}_n(s)=\frac{\lambda_{n}}{[n]_q}\frac{\tau_n(s)}{\widetilde{\tau}'_n}-\lambda_n \Delta x(s-\half).
$$
Notice that the above differentiation formula is valid for the $q$-polynomials on the $q$-quadratic lattices. 
In order to obtain a formula  for the polynomials on the quadratic lattices,
one can consider the limit case when $q\rightarrow 1$.

Now, to compute $\Delta^{(1)}P_{n}(s)=\frac{\Delta P_n(s)}{\Delta x(s)}$, we first multiply the above equality by $\phi(s)$ and then use formula
\refe{diff.formula2}, then we reach
\[
\!\!\!\!\!\!\!\!\!\!\!B_1(s)\phi(s)P_{n-1}(s)+B_2(s)\left[\widehat{\alpha}_{n}P_{n+1}(s)+\widehat{\beta}_{n}(s)P_n(s)\right]
+B_3(s)\phi(s)P_{n+1}(s)=0,
\]
which can be rewritten as the following form
\[
\!\!\!\!\!\!\!\!\!\!\!\left[B_2(s)\widehat{\alpha}_{n}+B_3(s)\phi(s)\right]P_{n+1}(s)
+B_2(s)\widehat{\beta}_n(s)P_{n}(s)
+B_1(s)\phi(s)P_{n-1}(s)=0.
\]
By the TTRR, we have the following system of equations
\[
B_2(s)\widehat{\alpha}_{n}+B_3(s)\phi(s)=\widetilde{\alpha}_n, \quad
B_2(s)\widehat{\beta}_n(s)=\widetilde{\beta}_n-x(s),\quad
B_1(s)\phi(s)=\widetilde{\gamma}_n,
\]
which leads to
\[
B_1(s)=\dsp\frac{\widetilde{\gamma}_n}{\phi(s)}, \,
B_2(s)=\dsp\frac{\widetilde{\beta}_n-x(s)}{\widehat{\beta}_{n}(s)}, \,
B_3(s)=\dsp\frac{1}{\phi(s)}\left[\widetilde{\alpha}_{n}-\frac{\widehat{\alpha}_{n}}{\widehat{\beta}_{n}(s)}(\widetilde{\beta}_n-x(s))\right].
\]
Considering the limit case as $q\rightarrow 1$, the above coefficients become 
\[
B_1(s)=\dsp\frac{\gamma_n}{\phi(s)}, \,
B_2(s)=\dsp\frac{\beta_n-x(s)}{\widehat{\beta}_{n}(s)}, \,
B_3(s)=\dsp\frac{1}{\phi(s)}\left[\alpha_{n}-\frac{\widehat{\alpha}_{n}}{\widehat{\beta}_{n}(s)}(\beta_n-x(s))\right].
\]
Then, inserting the corresponding values of the dual Hahn polynomials from Table \ref{dualhahntable}, \cite[Table 3.7., Page 109]{NSU}
in coefficients $B_i(s)$, $i=1,2,3$ leads to
\begin{eqnarray*}
\!\!\!\!\!\!\!\!\!&\!\!\!\!\!\!\!\!\!&\!\!\!\!\!\!\!\!\!B_1(s)=\frac{(a+c+n)(b-a-n)(b-c-n)}{(s+a+1)(s+c+1)(b-s-1)},\\
\!\!\!\!\!\!\!\!\!&\!\!\!\!\!\!\!\!\!&\!\!\!\!\!\!\!\!\!B_2(s)=-\frac{[ab-ac+bc+(b-a-c-1)(2n+1)-2n^2-s(s+1)](2s+n+1)}{\kappa_{n+1}},\\
\!\!\!\!\!\!\!\!\!&\!\!\!\!\!\!\!\!\!&\!\!\!\!\!\!\!\!\!B_3(s)=\frac{n+1}{(s+a+1)(s+c+1)(b-s-1)}\left[1+B_2(s)\right],
\end{eqnarray*}
where $\kappa_n=(s+a+n)(s+c+n)(b-s-n)-(s-a)(s+b)(s-c)+(n-1)(2s+1)(2s+n).$

\begin{table}[ht!]
\caption{Main data of the monic $q$-Racah polynomials}
\label{racahtable}
\begin{center}
\tiny
{\renewcommand{\arraystretch}{.25}
\begin{tabular}{|@{}c@{}| | @{}c@{}|}\hline
 &  \\
$P_n(s)$ & $R_n(x(s),\alpha,\beta,\gamma,\delta|q)   \,,\quad x(s) = q^{-s}+\delta \gamma q^{s+1}$ , 
\,\,\, $\Delta x(s)=q^{-s}(1-\delta\gamma q^{2s+2})(q^{-1}-1)$ \\
 &  \\
\hline\hline
 &  \\
$\sigma(s)$ & \mbox{
$  \delta^2\gamma^2q^2(q^{1/2}-q^{-1/2})^2q^{-2s}(q^s-1)(q^s-\delta^{-1})
(q^s-\beta \gamma^{-1})(q^s-\alpha\delta^{-1}\gamma^{-1}) $} \\
 &  \\
\hline
 &  \\
$\phi(s)$ & \mbox{
$  (q^{1/2}-q^{-1/2})^2q^{-2s}(1-\alpha q^{s+1})(1-\beta\delta q^{s+1})(1-\gamma q^{s+1})(1-\delta \gamma q^{s+1}) $} \\
 &  \\
\hline
 &  \\
$\tau(s) $ &  \mbox{
\begin{tabular}{r}
$ \frac{(q^{-1/2}-q^{1/2})q^{-s}}{(1-\gamma\delta q^{2s+1})}\Big[(1-\alpha q^{s+1})(1-\beta\delta q^{s+1})(1-\gamma q^{s+1})(1-\delta \gamma q^{s+1})-(\delta\gamma q)^2 (q^s-1)$  \\[3mm]
 $\times (q^s-\delta^{-1})(q^s-\beta \gamma^{-1})(q^s-\alpha\delta^{-1}\gamma^{-1})\Big]$ \end{tabular}} \\
 &  \\
 \hline
 &  \\
$\tau'$ &\mbox{
$(q^{1/2}-q^{-1/2})(1-\beta\gamma q^2)$} \\
 &  \\
\hline
$\tau_n(s) $ &  \mbox{
\begin{tabular}{r}
$ -(q^{1/2}-q^{-1/2})\Big\{(1-\alpha\beta q^{2n+2})x(s+\frac n2)+q^{-n/2}\Big[(1-\alpha q^{n+1})
(1-\beta\delta q^{n+1})(1-\gamma q^{n+1})
$\\[3mm] $-(1+\delta \gamma q^{n+1})(1-\alpha\beta q^{2n+2})\Big]\Big\}$ \end{tabular}}\\
 &  \\
 \hline
 &  \\
$\lambda _n$ &\mbox{
$-q^{-n+\textstyle \frac 12}(1-q^n)(1-\alpha\beta q^{n+1})$} \\
 &  \\
\hline
 &  \\
$\frac{\widetilde{\sigma}_{\nu}''}{2}$ &    \mbox{$\frac 12q^{-n}(q^{1/2}-q^{-1/2})^2(1+\alpha\beta q^{2n+2})
$}\,\,\, \\
 &  \\
 \hline
 &  \\
$\widetilde{\sigma}_{\nu}'(0)$ &    \mbox{$-\frac 12q^{-\frac n2}(q^{1/2}-q^{-1/2})^2\Bigg[\beta\delta q+\alpha q+\delta \gamma q+\gamma q+\alpha\beta\delta q^{n+2}+\alpha\beta q^{n+2}+\beta \delta \gamma q^{n+2}+\alpha \gamma q^{n+2}\Bigg]
$}\,\,\, \\
 &  \\
 \hline
 &  \\
$\widetilde{\sigma}_{\nu}(0)$ &    \mbox{$\delta \gamma qq^{-n}(q^{1/2}-q^{-1/2})^2\Bigg[\beta\alpha q^{n+1}+\beta\delta q^{n+1}+\alpha q^{n+1}+\beta q^{n+1}+\alpha\delta^{-1} q^{n+1}+\gamma q-1-\alpha\beta q^{2n+2}\Bigg]
$}\,\,\, \\
 &  \\
 \hline

 $\widetilde{\beta}_n $ &  \begin{tabular}{c}
$\dst1+\delta \gamma q-\frac{(1-\alpha q^{n+1})(1-\alpha\beta q^{n+1})(1-\beta\delta q^{n+1})(1-\gamma q^{n+1})}
{(1-\alpha\beta q^{2n+1})(1-\alpha\beta q^{2n+2})}$ \\[3mm]
$\dst-\frac{q(1-q^n)(1-\beta q^n)(\gamma-\alpha\beta q^n)(\delta-\alpha q^n)}
{(1-\alpha\beta q^{2n})(1-\alpha\beta q^{2n+1})}$
\end{tabular}\\
 &  \\
\hline
 &  \\
$\widetilde{\gamma}_n$ & $ \dst\frac{(1-\alpha q^{n})(1-\alpha\beta q^{n})(1-\beta\delta q^{n})(1-\gamma q^{n})}
{(1-\alpha\beta q^{2n-1})(1-\alpha\beta q^{2n})}$ 
$\dst\frac{q(1-q^n)(1-\beta q^n)(\gamma-\alpha\beta q^n)(\delta-\alpha q^n)}
{(1-\alpha\beta q^{2n})(1-\alpha\beta q^{2n+1})} $ \\
 &  \\
\hline
&  \\
$\widehat{\alpha}_n$ &  $q^{-n+\textstyle\frac 12}(q^{-1/2}-q^{1/2})(1-\alpha\beta q^{2n+1})$  \\&  \\
\hline &  \\
$\overline{\beta}_n(s) $ & $\begin{tabular}{r}
$-q^{-n/2+1/2}(q^{1/2}-q^{-1/2})\dst\frac{(1-\alpha\beta q^{n+1})}{(1-\alpha\beta q^{2n+2})}\Big\{q^{-1}(1-\alpha\beta q^{2n+2})x(s+\frac n2)+q^{-n/2}q^{-n-1}\Big[(1-\alpha q^{n+1})
$\\[3mm] $\times (1-\beta\delta q^{n+1})(1-\gamma q^{n+1})-(1+\delta \gamma q^{n+1})(1-\alpha\beta q^{2n+2})\Big]\Big\}$ \end{tabular}$\\
\hline &\\
$\widehat{\beta}_n(s) $ & 
$\overline{\beta}_n(s)-q^{-s-n+\textstyle\frac 12}(q^{1/2}-q^{-1/2})(1-q^n)(1-\alpha\beta q^{n+1})(1-\delta \gamma q^{2s+1})$ \\
\hline
\end{tabular}  }
\end{center}
\end{table}

\begin{table}[ht!]
\caption{Main data of the dual Hahn polynomials} 
\label{dualhahntable}
\begin{center}
{\renewcommand{\arraystretch}{.25}
\begin{tabular}{|@{}c@{}| | @{}c@{}|}\hline
 &  \\
$P_n(s)$ & $W^{(c)}_n(x(s))   \,,\quad x(s) =s(s+1)$ , 
\,\,\, $\Delta x(s)=2s+2$ \\
 &  \\
\hline\hline
 &  \\
$\sigma(s)$ & \mbox{
$(s-a)(s+b)(s-c)$} \\
 &  \\
\hline
 &  \\
$\phi(s)$ & \mbox{
$  (s+a+1)(s+c+1)(b-s-1) $} \\
 &  \\
\hline
 &  \\
$\tau(s) $ &  \mbox{
$ ab-ac+bc-a+b-c-1-x(s)$}   \\
 &  \\
 \hline
 &  \\
$\lambda _n$ &\mbox{
$n$} \\
 &  \\
\hline
 &  \\
 $\alpha_n=\widehat{\alpha}_n $ & \mbox{
$n+1$}
\\
 &  \\
\hline
 &  \\
 $\beta_n $ & \mbox{
$ab-ac+bc+(b-a-c-1)(2n+1)-2n^2$}
\\
 &  \\
\hline
 &  \\
$\gamma_n$ & \mbox{$(a+c+n)(b-a-n)(b-c-n) $} \\
 &  \\
\hline
&  \\
$\widehat{\beta}_n(s) $ &\mbox{
$\frac{(s+a+n+1)(s+c+n+1)(b-s-n-1)-(s-a)(s+b)(s-c)+n(2s+1)(2s+n+1)}{2s+n+1}$} \\
\hline
\end{tabular}}
\end{center}
\end{table}

\section{Concluding remarks}
In the present work,  some recurrence relations are developed 
for the hypergeometric functions on the quadratic-type lattices with
applications in the $q$-Racah and dual Hahn polynomials. 
To obtain the recurrence relations for the other classes of polynomials, one can use appropriate limit transitions.

\section*{Acknowledgements:} 
The author would like to thank to Prof. R. \'{A}lvarez-Nodarse for his valuable comments and suggestions in preparing this manuscript.

\bigskip

\end{document}